\newtheorem{thm}{Theorem}[section]
\newtheorem{prob}[thm]{Problem}
\newtheorem{lem}[thm]{Lemma}
\newtheorem{cor}[thm]{Corollary}
\theoremstyle{definition}
\newtheorem{casethm}{Case}[thm]
\makeatletter \@addtoreset{equation}{section} \makeatother
\def\={\;=\;}
\def\lle{\;\le\;}
\def\gge{\;\ge\;}
\def\Z{\mathbb{Z}}
\def\N{\mathbb{N}}
\def\rmand{\quad\hbox{ and }\quad}
\def\dist{\mathrm{dist}}
\crefname{case}{Case}{Cases}
\crefname{case}{Case}{Cases}
\crefname{casethm}{Case}{Cases}
\crefname{casethm}{Case}{Cases}
\crefname{cond}{Condition}{Conditions}
\Crefname{cond}{Condition}{Conditions}
\crefname{def}{Definition}{Definitions}
\Crefname{def}{Definition}{Definitions}
\crefname{ineq}{Ineq.}{Ineqs.}
\Crefname{ineq}{Inequality}{Inequalities}
\crefname{prob}{Problem}{Problems}
\Crefname{prob}{Problem}{Problems}
\crefname{rl}{Relation}{Relations}
\crefname{rl}{Relation}{Relations}
\crefname{thm}{Theorem}{Theorems}
\Crefname{thm}{Theorem}{Theorems}
\def\mathllapinternal#1#2{\llap{$\mathsurround=0pt#1{#2}$}}
\def\mathllap{\mathpalette\mathllapinternal}
\begin{document}

\title[A Tutte-type characterization for graph factors]
{A Tutte-type characterization for graph factors}

\author[H. Lu]{Hongliang Lu}
\address{School of Mathematics and Statistics, Xi'an Jiaotong university, 710049 Xi'an, P.\ R.\ China}
\email{luhongliang@mail.xjtu.edu.cn}

\author[D.G.L. Wang]{David G.L. Wang$^{\dag\ddag}$}
\address{
$^\dag$School of Mathematics and Statistics, Beijing Institute of Technology, 102488 Beijing, P.\ R.\ China\\
$^\ddag$Beijing Key Laboratory on MCAACI, Beijing Institute of Technology, 102488 Beijing, P.\ R.\ China}
\email{glw@bit.edu.cn}

\keywords{perfect matching, $H$-factor, antifactor}
\subjclass[2010]{05C75 05C70}

\begin{abstract}
Let $G$ be a connected general graph. Let $f\colon V(G)\to \Z^+$ be a function. We show that $G$ satisfies the Tutte-type condition \[ o(G-S)\le f(S)\qquad\text{for all vertex subsets $S$}, \] if and only if it contains a colored $J_f^*$-factor for any $2$-end-coloring, where $J_f^*(v)$ is the union of all odd integers smaller than $f(v)$ and the integer $f(v)$ itself. This is a generalization of the $(1,f)$-odd factor characterization theorem, and answers a problem of Cui and Kano. We also derive an analogous characterization for graphs of odd orders, which addresses a problem of Akiyama and Kano.
\end{abstract}

\maketitle

\section{Introduction}\label{sec:introduction}

This paper concerns Tutte type conditions and the existence of factors in general graphs.
A considerable large number of literatures on graph factors can be found
from Akiyama and Kano's book~\cite{AK11B}, and from Liu and Yu's book \cite{YL09B}.

Tutte's theorem~\cite{Tut47} states that a graph $G$ has a perfect matching if and only if 
\begin{equation}\label{Cond:Tutte}
o(G-S)\lle |S|\qquad\text{for any vertex subset $S$},
\end{equation}
where $o(G-S)$ denotes the number of odd components of the subgraph $G-S$.
Let $H\colon V(G)\to 2^\N$ be a set-valued function.
A spanning subgraph $F$ of $G$ is said to be an $H$-factor if $\deg_F(v)\in H(v)$.
Let $f\colon V(G)\to \Z^+$ be an odd-integer-valued function.
The factor $F$ is said to be a $(1,f)$-odd factor if it is an $H$-factor where $H(v)=\{1,3,5,\ldots,f(v)\}$.
In particular, a perfect matching is called a $1$-factor.

Lov\'asz~\cite{Lov69} proposed the {\em degree prescribed subgraph problem}
of determining the distance of a factor from a given integer set function.
He~\cite{Lov72} considered it with the restriction that the given set function $H$ is allowed,
i.e., that every gap of the set $H(v)$ for each vertex $v$ is at most two.
He also showed that the problem is NP-complete when the function $H$ is not allowed.
Cornu\'ejols~\cite{Cor88} provided a polynomial Edmonds-Johnson type alternating forest algorithm
for the degree prescribed subgraph problem with~$H$ allowed,
which implies a Gallai-Edmonds type structure theorem.

For convenience, we denote 
\[
J_n
\=\begin{cases}
\{1,3,5,\ldots,n\},&\text{if $n$ is odd};\\[4pt]
\{1,3,5,\ldots,n-1,n\},&\text{if $n$ is even}.
\end{cases}
\]
Define $J_f(v)=J_{f(v)}$ for all vertices $v$.
Under this notation, $J_{f}$-factors are exactly $(1,f)$-odd factors when $f(v)$ is odd for each vertex $v$.
Amahashi~\cite{Ama85} gave a Tutte-type characterization for graphs having a ``global odd factor''.
\begin{thm}[Amahashi]\label{thm:Amahashi}
Let $n\ge 2$ be an integer.
A general graph $G$ has a $J_{2n-1}$-factor if and only if
\begin{equation}\label[cond]{cond:Amahashi}
o(G-S)\le(2n-1)|S|\qquad\text{for all vertex subsets $S$}.
\end{equation}
\end{thm}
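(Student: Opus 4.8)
The two implications are of very different difficulty, so I would treat them separately.

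\emph{Necessity.} Suppose $F$ is a $J_{2n-1}$-factor and fix a vertex subset $S$. If $C$ is an odd component of $G-S$, every $F$-edge leaving $C$ must land in $S$, so
\[
\sum_{v\in C}\deg_F(v)\=2\,e_F(C)+e_F(C,S),
\]
where $e_F(C)$ counts the $F$-edges inside $C$ and $e_F(C,S)$ the $F$-edges between $C$ and $S$. The left-hand side is a sum of $|C|$ odd numbers with $|C|$ odd, hence odd; therefore $e_F(C,S)$ is odd, and in particular positive. Summing over all odd components of $G-S$ and using that each of these edges is incident to $S$,
\[
o(G-S)\lle\sum_{v\in S}\deg_F(v)\lle(2n-1)\,|S|,
\]
the last step because $\deg_F(v)\le 2n-1$ for every $v$. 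This is exactly \cref{cond:Amahashi}.

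\emph{Sufficiency.} Here I would reduce to Tutte's theorem by replacing each vertex with a gadget that encodes the degree constraint $\{1,3,\ldots,2n-1\}$. For a vertex $v$ with $d=\deg_G(v)$, introduce $d$ \emph{terminals}, one per incident edge, a clique $W_v$ on $\min(d-1,\,2n-2)$ vertices, and an independent set $P_v$ on $\max(0,\,d-2n+1)$ vertices; join every terminal to every vertex of $W_v\cup P_v$, put no edges between $W_v$ and $P_v$, and call the result $Q_v$. Let $G^*$ be obtained from the disjoint union of the $Q_v$ by adding, for each edge $uv$ of $G$, the edge joining the two corresponding terminals. A direct matching count inside $Q_v$ (each $P_v$-vertex forces a terminal inward, the rest pair into the clique $W_v$, and the clique leftover must be even) shows that, after deleting any $k$ of its terminals, the remainder of $Q_v$ has a perfect matching if and only if $k$ is odd and $1\le k\le 2n-1$. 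Consequently perfect matchings of $G^*$ correspond exactly to $J_{2n-1}$-factors of $G$, the terminals matched across original edges at $v$ recording the edges of $F$ at $v$.

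It then remains to verify that \cref{cond:Amahashi} forces Tutte's condition $o(G^*-X)\le|X|$ for every $X\subseteq V(G^*)$. The plan is to replace an arbitrary minimal counterexample $X$ by a canonical one: one argues that $X$ meets each $Q_v$ in a controlled way (for instance, once $X$ contains a terminal of $Q_v$ it may be assumed to contain all of $W_v$), which lets one read off a set $S\subseteq V(G)$ from the gadgets that $X$ genuinely touches. Tracking how the odd components of $G^*-X$ sit inside the gadgets and across the original edges, one compares $o(G^*-X)$ against $o(G-S)\le(2n-1)|S|$ and charges the surplus to the clique vertices of size $2n-2$ that $X$ is forced to contain.

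The main obstacle is precisely this last verification: the gadget replacement inflates both sides of Tutte's inequality, and one must show that the inflation on the $|X|$ side, coming from the cliques $W_v$, always dominates the inflation on the $o(G^*-X)$ side. Getting the parity bookkeeping right — deciding which gadget-internal components are odd and matching each to a charge against $X$ — is where the real work lies; the passage to a canonical barrier $X$ is the device that makes this bookkeeping tractable.
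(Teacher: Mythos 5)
Your necessity argument is correct and complete: the parity count showing $e_F(C,S)$ is odd for each odd component $C$, followed by the degree bound on $S$, is exactly the standard (and valid) derivation of \cref{cond:Amahashi} from the existence of a $J_{2n-1}$-factor. Your gadget is also correctly designed: since $|W_v|+|P_v|=d-1$, deleting $k$ terminals leaves $k-1$ clique vertices to be paired internally after the $P_v$-vertices absorb their forced terminals, so the remainder of $Q_v$ has a perfect matching precisely when $k$ is odd and $1\le k\le\min(d,2n-1)$; hence perfect matchings of $G^*$ do biject with $J_{2n-1}$-factors of $G$.

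The gap is in the sufficiency direction, and you have located it yourself: everything after ``It then remains to verify'' is a plan, not an argument. The deduction of Tutte's condition $o(G^*-X)\le|X|$ for the auxiliary graph from \cref{cond:Amahashi} is the entire mathematical content of this direction, and it is genuinely delicate --- one must handle barriers $X$ that contain non-terminal gadget vertices, odd components of $G^*-X$ that live wholly inside a single gadget versus those that project onto components of $G-S$, and the charging of the surplus odd components against the at most $2n-2$ clique vertices per touched gadget. None of this is carried out, and phrases such as ``one argues that'' and ``the plan is to'' cannot substitute for the verification. Note also that the paper itself does not prove this theorem (it is quoted from Amahashi); within the paper's framework it is instead a special case of \cref{thm:CK} (take $f\equiv 2n-1$), which the authors approach through Lov\'asz's structure theory for degree-prescribed subgraphs (the $H$-decomposition and the formula for $\delta(H)$ in \cref{thm:deltaH}) rather than through a gadget reduction to Tutte's theorem. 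Your reduction route is viable in principle, but as written the proof of sufficiency is incomplete.
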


By ``global odd factor'' we mean that the coefficient $(2n-1)$ in \cref{cond:Amahashi} is an odd integer independent of the vertices.
By generalizing the constant $2n-1$ to an odd-valued function $f(v)$,
Cui and Kano~\cite{CK88} obtained \cref{thm:Amahashi} as follows.
We denote $f(S)=\sum_{v\in S}f(v)$ for any vertex subset $S$.

\begin{thm}[Cui and Kano]\label{thm:CK}
Let $G$ be a general graph of even order. 
Let $f\colon V(G)\to \Z^+$ be a function 
such that $f(v)$ is odd for each vertex $v$.
The graph $G$ has a $J_f$-factor if and only if
\begin{equation}\label[cond]{cond:f}
o(G-S)\le f(S)\qquad\text{for all vertex subsets $S$}.
\end{equation}
\end{thm}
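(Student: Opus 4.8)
The plan is to prove the two implications separately, the necessity being a short parity-and-counting argument and the sufficiency the substantial part, which I would handle by induction combined with a barrier (tight-set) decomposition. Before anything else I would record the parity identity that drives the whole argument: since $G$ has even order, $|S|+o(G-S)$ is even, while $f(S)\equiv|S|\pmod 2$ because each $f(v)$ is odd; hence $f(S)-o(G-S)$ is \emph{always even}, so whenever \cref{cond:f} is strict it has slack at least~$2$. For necessity, suppose $F$ is a $J_f$-factor; as each $f(v)$ is odd, every vertex has odd degree in $F$. Fix $S$. For an odd component $C$ of $G-S$, the only $F$-edges leaving $C$ end in $S$, so $\sum_{v\in C}\deg_F(v)=2e_F(C)+e_F(C,S)$, where $e_F(C,S)$ counts the $F$-edges between $C$ and $S$; the left side has the parity of $|C|$, which is odd, whence $e_F(C,S)$ is odd and in particular positive. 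Summing over the odd components and bounding $\sum_C e_F(C,S)\le\sum_{v\in S}\deg_F(v)\le f(S)$ yields \cref{cond:f}.

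For sufficiency I would induct on $\sum_v f(v)$, breaking ties by $|V(G)|$. If $f\equiv 1$, then a $J_f$-factor is a perfect matching and \cref{cond:f} is exactly Tutte's condition, so Tutte's theorem finishes. Otherwise pick $u$ with $f(u)\ge 3$. If no nonempty set is tight, i.e.\ every nonempty $S$ satisfies $o(G-S)<f(S)$ and hence $o(G-S)\le f(S)-2$ by the parity identity, then replacing $f$ by $f'$ with $f'(u)=f(u)-2$ and $f'=f$ elsewhere preserves \cref{cond:f}, and a $J_{f'}$-factor is a fortiori a $J_f$-factor, so induction applies. The essential case is when some nonempty set is tight. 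Choose a maximal tight set $S_0$, so $o(G-S_0)=f(S_0)$. The counting above forces each odd component of $G-S_0$ to absorb at least one $F$-edge from $S_0$; since the number of odd components equals the total capacity $f(S_0)=\sum_{v\in S_0}f(v)$, in any valid assignment every $v\in S_0$ must send exactly $f(v)$ edges, each odd component receives exactly one, and even components receive none. The required assignment of odd components to vertices of $S_0$ exists by a defect-Hall argument whose hypothesis is immediate from \cref{cond:f}: for a family $\mathcal D$ of odd components with $S'=N_G(\mathcal D)\cap S_0$, the members of $\mathcal D$ remain odd components of $G-S'$, so $|\mathcal D|\le o(G-S')\le f(S')$.

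It remains to build the factor inside the components. Each even component $C$ inherits \cref{cond:f}: for $S'\subseteq C$ one computes $o(G-(S_0\cup S'))=f(S_0)+o(C-S')$, and comparing with $f(S_0\cup S')=f(S_0)+f(S')$ gives $o(C-S')\le f(S')$; since $C$ has even order and fewer vertices, induction supplies a $J_f$-factor of $C$. For an odd component $C$ with prescribed attachment vertex $w_C$ (the endpoint in $C$ of its unique incoming edge) I instead need a spanning subgraph of $C$ in which $w_C$ has \emph{even} degree at most $f(w_C)-1$ and every other vertex has odd degree in its $J_f$-set. Maximality of $S_0$ is exactly what controls this: here $o(G-(S_0\cup S'))=f(S_0)-1+o(C-S')$, so a nonempty $S'\subseteq C$ with $o(C-S')=f(S')+1$ would make $S_0\cup S'$ a strictly larger tight set; hence every nonempty $S'\subseteq C$ satisfies $o(C-S')\le f(S')-1$, i.e.\ $C$ obeys \cref{cond:f} strictly off the empty set.

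I expect the real difficulty to lie in this last step. The natural device of attaching a pendant vertex of demand~$1$ at $w_C$ and invoking the theorem on the resulting even-order graph does \emph{not} always decrease the induction measure: when $S_0=\{v\}$ with $f(v)=1$ it reproduces $G$ itself, so the reduction is circular. I would therefore resolve the odd-component case by strengthening the inductive statement to include this ``critical'' near-factor assertion—one designated vertex relaxed to even degree—and proving it in parallel, exploiting the strict inequalities $o(C-S')\le f(S')-1$ above; alternatively one can route it through the Gallai--Edmonds/deficiency description available for allowed degree sets, since each $J_f(v)$ has gaps at most two. Establishing this criticality, and organizing the simultaneous induction so that it is genuinely well-founded, is the step I anticipate being the most delicate, and it is where the bulk of the technical work will concentrate.
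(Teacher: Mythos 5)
Your necessity argument is correct, and the skeleton of your sufficiency argument (parity slack, reduction of $f$ when no nonempty tight set exists, maximal tight set $S_0$, defect--Hall assignment of odd components, and the inherited conditions $o(C-S')\le f(S')$ for even components and $o(C-S')\le f(S')-1$ for odd components) is sound as far as it goes. But the proof has a genuine gap exactly where you say you expect one, and it is not a minor technicality: the entire difficulty of the theorem is concentrated in the claim that a connected graph $C$ of odd order satisfying $o(C-S')\le f(S')-1$ for all nonempty $S'$ admits, for a prescribed vertex $w_C$, a spanning subgraph in which $w_C$ has even degree at most $f(w_C)-1$ and every other vertex has odd degree at most $f(\cdot)$. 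You correctly observe that the pendant-vertex reduction is circular (when $S_0=\{v\}$ with $f(v)=1$ the auxiliary graph has the same induction measure as $G$), and the two remedies you gesture at --- a strengthened simultaneous induction on the ``critical'' statement, or an appeal to a Gallai--Edmonds-type deficiency theory for allowed degree prescriptions --- are left entirely unexecuted. The second remedy is in fact the content of the theorem being proved: the deficiency/criticality machinery for allowed set functions is Lov\'asz's structure theory (\cref{lem:critical}, \cref{lem_delta=1}, \cref{thm:deltaH}), and invoking it here without proof is not a reduction but an import of the hard part. As written, the proof is incomplete at its crux.

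For comparison: this paper does not reprove the Cui--Kano theorem directly. It is quoted from \cite{CK88} and recovered as the odd-valued specialization of \cref{thm:main:even}, whose proof runs entirely through Lov\'asz's $H$-decomposition $(A_H,B_H,C_H,D_H)$ and the deficiency formula of \cref{thm:deltaH}; the role played by your unproven ``critical component'' lemma is there played by \cref{lem:critical} and \cref{lem_delta=1}, which assert that each component of $G[D_H]$ has deficiency exactly $1$, after which a parity count shows these components are odd and contradicts \cref{cond:f}. So the structural decomposition you would need to develop from scratch is precisely the black box the paper takes from \cite{Lov72}. If you want a self-contained elementary proof along your inductive lines, the standard route is to prove the criticality statement and the factor statement by a genuinely simultaneous induction (on $|V|+\sum_v f(v)$, say, with the critical case strictly decreasing the measure because $S_0\ne\emptyset$ is removed \emph{before} the pendant vertex is attached, handled by analyzing the case $f(S_0)=1$ separately); until that is carried out and its well-foundedness verified, the argument does not establish the theorem.
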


They~\cite{CK88} also proposed the characterization problem for the existence of a ``global even factor''.

\begin{prob}[Cui and Kano]\label{prob:CK}
Is it possible to characterize graphs that satisfy the condition
\begin{equation}\label[cond]{cond:CK:2n}
o(G-S)\le 2n\,|S|\qquad\text{for all vertex subsets $S$},
\end{equation}
in terms of factors?
\end{prob}
In analog with \cref{thm:Amahashi}, 
the present authors~\cite{LW13} obtained the following partial answer to \cref{prob:CK}.

\begin{thm}[Lu and Wang]\label{thm:LW}
Let $n\ge 2$. Let $G$ be a simple connected graph satisfying \cref{cond:CK:2n}.
Then $G$ contains a $J_{2n}$-factor.
\end{thm}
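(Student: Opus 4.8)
The plan is to reduce the existence of a $J_{2n}$-factor to a perfect matching problem and then invoke Tutte's theorem~\eqref{Cond:Tutte}. Note first that if the stronger bound $o(G-S)\le(2n-1)|S|$ happened to hold for all $S$, then \cref{thm:Amahashi} would already produce a $J_{2n-1}$-factor, which is a $J_{2n}$-factor since $J_{2n-1}\subseteq J_{2n}$; the whole difficulty lies in the extra slack $|S|$ that \cref{cond:CK:2n} permits beyond Amahashi's condition. To exploit this, observe that the target set $J_{2n}=\{1,3,\ldots,2n-1,2n\}$ is \emph{allowed} in the sense of Lov\'asz, since consecutive elements differ by at most $2$. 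Hence one may build, by the classical Tutte--Lov\'asz gadget construction, an auxiliary graph $G^{*}$ obtained from $G$ by replacing each vertex $v$ with a gadget $B_v$ carrying $\deg_G(v)$ external terminals, designed so that in every perfect matching of $G^{*}$ the number of terminals of $B_v$ matched across original edges lies in $J_{2n}(v)$. Thus $G$ has a $J_{2n}$-factor if and only if $G^{*}$ has a perfect matching.

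Next I would apply Tutte's theorem to $G^{*}$, so that it suffices to verify $o(G^{*}-X)\le|X|$ for every $X\subseteq V(G^{*})$. The heart of the argument is to show it is enough to test sets $X$ arising from a vertex subset $S\subseteq V(G)$ by deleting the entire gadget $B_v$ for each $v\in S$; a standard uncrossing argument shows that a set $X$ maximizing $o(G^{*}-X)-|X|$ may be assumed to have this form. For such an $X$, the odd components of $G-S$ reappear as odd components of $G^{*}-X$, while the deleted gadgets $B_v$ contribute both to $|X|$ and to the count of odd components internal to them. After cancelling those internal odd components against the corresponding part of $|X|$, the net effect is that each $v\in S$ supplies a neutralizing capacity equal to $\max J_{2n}(v)=2n$, so Tutte's inequality for $G^{*}$ collapses exactly to
\[
o(G-S)\;\le\;2n\,|S|\qquad\text{for all }S\subseteq V(G),
\]
which is precisely \cref{cond:CK:2n}. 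Since $G$ is assumed to satisfy this, $G^{*}$ has a perfect matching and $G$ a $J_{2n}$-factor.

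The main obstacle is this translation step, and in particular the parity subtlety concealed in the even value $2n$. Unlike the odd case treated by Amahashi (\cref{thm:Amahashi}) and Cui--Kano (\cref{thm:CK}), an odd component $C$ of $G-S$ need not send an edge into $S$: it may instead meet its degree constraints internally by assigning one of its vertices the even degree $2n$, provided $C$ is large enough and $G$ is simple. One must therefore verify that precisely the components which \emph{cannot} self-correct their parity survive as the odd components of $G^{*}-X$, so that the count in the reduced inequality matches the combinatorial $o(G-S)$ of \cref{cond:CK:2n}. This is exactly where the hypotheses $n\ge2$, simplicity, and connectivity are needed: they ensure that the small components of $G-S$ (isolated vertices and short paths) have no internal room to absorb the parity, forcing them to consume the capacity $2n$ of $S$, while connectivity rules out degenerate global obstructions. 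As an alternative to the explicit gadget one could instead invoke the Gallai--Edmonds-type deficiency formula for allowed degree-constrained subgraphs and argue by contradiction: a graph with no $J_{2n}$-factor would exhibit a barrier $S$ of positive deficiency, and the same capacity computation would force $o(G-S)>2n|S|$, contradicting \cref{cond:CK:2n}.
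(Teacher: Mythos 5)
There is a decisive gap at the very first step: the gadget you invoke cannot exist. In any vertex gadget $B_v$ with a fixed internal vertex set and $\deg_G(v)$ external terminals, a perfect matching of the auxiliary graph matches some subset $T$ of the terminals outward and must perfectly match $B_v-T$ internally; this forces $|V(B_v)|-|T|$ to be even, so the set of achievable outward-degrees at $v$ is contained in a single parity class determined by $|V(B_v)|$. The target set $J_{2n}=\{1,3,\ldots,2n-1,2n\}$ contains the consecutive integers $2n-1$ and $2n$, so no parity-preserving matching gadget realizes it. This is not a technicality: it is exactly why the classical Tutte reduction covers $f$-factors, $(g,f)$-factors and parity intervals such as $J_{2n-1}$ (whence \cref{thm:Amahashi} and \cref{thm:CK} admit matching proofs), but not $J_{2n}$, and why the degree prescribed subgraph problem for general allowed sets required Lov\'asz's separate structure theory. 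Consequently the translation of Tutte's condition for $G^{*}$ into \cref{cond:CK:2n} has no object to apply to, and the subsequent ``uncrossing'' and ``capacity $2n$ per vertex of $S$'' computations are asserted rather than proved.

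Your fallback remark --- invoking a deficiency formula for allowed degree-constrained subgraphs and exhibiting a barrier --- is in fact the route the paper takes (for the stronger \cref{thm:main:even}, which contains \cref{thm:LW} as the all-ends-red special case): one forms the $J_f^*$-decomposition $(A,B,C,D)$, shows $B=\emptyset$, applies \cref{thm:deltaH} to get $c(D)>f(A)+\nu(A)$, and then does the real work: bounding $\max I(v)$ for $v\in D$ via \cref{lem:interval}, and proving by a parity count of colored degrees in an optimal subgraph of each component of $G[D]$ that these components have odd order, so that $o(G-A)>f(A)$ contradicts the hypothesis. None of that content appears in your sketch; the parity subtlety you correctly identify as ``the main obstacle'' --- that an odd component might absorb its parity internally via the single even value $2n$ --- is precisely what the odd-order argument for the components of $G[D]$ settles, and it remains unproved in your proposal.
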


In the spirit of Cui and Kano's generalizing \cref{thm:Amahashi},
Egawa Kano, and Yan \cite{EKY15} generalized \cref{thm:LW}
by allowing the constant $2n$ to vary as a function.

\begin{thm}[Egawa et al.]\label{thm:EKY}
Let $G$ be a simple connected graph.
Let $f\colon V(G)\to\Z^+$ be a function. 
If $G$ satisfies \cref{cond:f}, then $G$ has a $J_f$-factor.
\end{thm}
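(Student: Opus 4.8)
The plan is to induct on the quantity $|V(G)|+\sum_{v}f(v)$ and to reduce, whenever possible, to the odd-valued case covered by \cref{thm:CK}. I first record a normalization: applying \cref{cond:f} with $S=\emptyset$ gives $o(G)=0$, so the connected graph $G$ has even order, which is exactly what makes a $J_f$-factor parity-feasible. Write $T=\{v\in V(G):f(v)\text{ is even}\}$. If $T=\emptyset$ then $f$ is odd-valued and \cref{thm:CK} delivers a $J_f$-factor directly, giving the base of the induction.

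So suppose $T\neq\emptyset$, and call a set $S$ \emph{tight} if $o(G-S)=f(S)$. The first case is that some $u\in T$ lies in no tight set, i.e.\ $o(G-S)\le f(S)-1$ for every $S\ni u$. Then lowering $f(u)$ to the odd value $f(u)-1$ preserves \cref{cond:f}: for $S\ni u$ the right-hand side drops by exactly one but still dominates $o(G-S)$, and for $S\not\ni u$ nothing changes. The modified function has strictly smaller sum, so by induction $G$ has a factor for it; since $J_{f(u)-1}\subseteq J_{f(u)}$ this is already a $J_f$-factor. Iterating, I may assume every vertex of $T$ lies in a tight set.

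In the remaining case I fix a nonempty tight set $S$ and use it as a barrier. Let $C_1,\dots,C_m$ be the odd components of $G-S$, so $m=f(S)$. The key combinatorial step is a defect-Hall argument: for every $S'\subseteq S$, the number of $C_i$ adjacent to $S'$ is at least $f(S')$, because the components of $G-S$ having no neighbour in $S'$ survive unchanged in $G-(S\setminus S')$, and \cref{cond:f} bounds their number by $f(S\setminus S')=f(S)-f(S')$. Hence $S$ admits an assignment sending each $v\in S$ to $f(v)$ distinct odd components so that all $m$ components are covered exactly once; selecting one edge per incidence uses every $v\in S$ at degree exactly $f(v)\in J_{f(v)}$, and in particular every even vertex of $S$ is realized at its even value. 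It then remains to complete a factor inside each component of $G-S$ and to verify that \cref{cond:f} descends to each of these strictly smaller sub-instances, whereupon induction applies.

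The main obstacle is precisely this completion step. The barrier edges reserve one unit of degree at the boundary vertex $b_i$ of each odd component, so inside $C_i$ the target set at $b_i$ is shifted to $J_{f(b_i)}-1$, which is no longer a $J$-set, and the naive induction hypothesis does not cover it. I expect to resolve this by strengthening the inductive statement to allow, at finitely many designated vertices, either the set $J_{f(v)}$ or its shift $J_{f(v)}-1$ — a family closed under the barrier operation because every $J_{f(v)}$ is an allowed set with all gaps at most two — and then checking that the Tutte-type inequality, suitably adjusted at the designated vertices, is inherited by each component. I also note that a shortcut through \cref{thm:CK} by attaching gadgets is unavailable: any gadget built from odd-valued vertices forces, by a handshake count, the number of its edges at $u$ into a single residue class modulo $2$, so it cannot reproduce the mixed-parity set $J_{f(u)}$; this parity obstruction is exactly what compels the direct barrier analysis.
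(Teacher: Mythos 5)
Your proposal is incomplete at exactly the point where the real difficulty lies, and you say so yourself: the ``completion step'' inside the components of $G-S$ is not carried out, only sketched as a plan. The gap is not cosmetic. Once a barrier edge is reserved at the boundary vertex $b_i$ of an odd component $C_i$, the degree prescription at $b_i$ becomes $J_{f(b_i)}-1$, a set containing $0$ and with the opposite parity pattern from a $J$-set; the statement you would need for the induction --- existence of a factor for a mixed prescription that equals $J_{f(v)}$ at most vertices but a shifted set at designated ones, under a suitably adjusted Tutte-type condition --- is a strictly stronger theorem than the one being proved and is not covered by \cref{thm:CK}. Your claim that this family of prescriptions is ``closed under the barrier operation'' is also unjustified: when you recurse inside $C_i$, the already-shifted vertex $b_i$ may itself land on the boundary of an odd component of the next tight set, producing $J_{f(b_i)}-2$, and nothing in your argument prevents this. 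Finally, the descent of \cref{cond:f} to the components is asserted but never verified; a short count shows that an even component of $G-S$ inherits the condition intact, while an odd component $C_i$ only inherits the deficiency-one version $o(C_i-S'')\le f(S'')+1$, which is consistent with your reserved edge but must be built into the strengthened induction hypothesis, not merely into the original statement.

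For comparison, the paper does not prove \cref{thm:EKY} by induction at all: it is quoted from Egawa, Kano and Yan, and rederived as the special case of \cref{thm:main:even} in which every end is colored red, so that colored degree coincides with ordinary degree and a colored $J_f^*$-factor is a $J_f$-factor. The proof of \cref{thm:main:even} runs through Lov\'asz's structure theory for the degree prescribed subgraph problem --- the decomposition $V(G)=A\sqcup B\sqcup C\sqcup D$, the deficiency formula of \cref{thm:deltaH}, and the criticality of the components of $G[D]$ --- showing that $B=\emptyset$ and that more than $f(A)$ components of $G[D]$ have odd order, contradicting \cref{cond:f} with $S=A$. If you pursue your barrier induction you will essentially be re-proving a piece of that structure theory by hand; as written, your argument establishes the easy reductions (even order, elimination of even-valued vertices lying in no tight set, the defect-Hall matching into odd components) but not the theorem.
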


By setting $S=\emptyset$, we see that \cref{prob:CK} involves only graphs of even order.
Taking account of graphs of odd order,
Akiyama and Kano \cite[Problem 6.14 (2)]{AK11B} presented the following problem in the same manner.
\begin{prob}[Akiyama and Kano]\label{prob:AK}
Let $G$ be a connected simple graph. 
Let $f\colon V(G)\to\Z^+$ be a function such that $f(v)$ is even for each vertex $v$.
If
\begin{equation}\label[cond]{cond:AK}
o(G-S)\lle f(S)\qquad\text{for all nonempty vertex subsets $S$},
\end{equation}
then what factor or property does $G$ have?
\end{prob}

We will characterize graphs of both even and odd orders satisfying the aforementioned Tutte-type conditions,
in terms of the so-called colored factors.
In fact, characterization results for the degree prescribed subgraph problem are rather scanty.
Examples include \cref{thm:CK}, and those on parity interval factors \cite{Lov72}, 
on the general antifactor problem \cite{Sebo93}, 
and on prescriptions whose gaps have the same parity \cite{Sza09}.
In this paper, we provide one more member to the family of characterization results on graph factors, i.e., 
a graph of even order satisfies the Tutte-type condition \eqref{cond:f} if and only if 
it contains a colored $J_f^*$-factor for any $2$-end-coloring; see \cref{thm:main:even}.
It reduces to \cref{thm:CK} by restricting the function $f$ to be odd-integer-valued,
and to \cref{thm:EKY} by coloring all ends in red.
It is also an answer to \cref{prob:CK}.
Together with \cref{thm:main:odd}, which deals with graphs of odd orders,
we obtain an answer to \cref{prob:AK}.

\section{Preliminary}\label{sec:preliminary}

Let $G$ be a general graph allowing both loops and parallel edges,
with vertex set $V(G)$ and edge set $E(G)$.
The idea of coloring each end of every edge is due to Lov\'asz \cite{Lov72}.
In this section, we give an overview of his idea that help interprets negative degrees of vertices,
based on which his structural description works for general graphs.

\subsection{Interpreting negative degrees of vertices}

We say that a general graph is {\em $2$-end-colored} (or with a {\em $2$-end-coloring})
if every end of every its edge is colored in red or in green,
and if the loop ends receive the same color for each loop.
We call a loop with two red ends a red loop, and a loop with two green ends a green loop.
Let $G$ be a general graph with a $2$-end-coloring.
One associates every edge~$e$
a characteristic function $e\colon V(G)\to\{0,\,\pm1,\,\pm2\}$, defined by
\begin{equation}\label[def]{def:cdeg}
e(v)\=\begin{cases}
2,&\text{if $e$ is a red loop and $v$ is the center of $e$};\\
-2,&\text{if $e$ is a green loop and $v$ is the center of $e$};\\
1,&\text{if $e$ is not a loop and $v$ is a red end of $e$};\\
-1,&\text{if $e$ is not a loop and $v$ is a green end of $e$};\\
0,&\text{otherwise, i.e., if $v$ is not incident with $e$}.
\end{cases}
\end{equation}
Define the {\em colored degree} of a vertex $v$ by 
\[
\Phi_G(v)\=\sum_{e\in E(G)}e(v).
\]
Alternatively, one may consider every red end is weighted by $+1$,
and every green end is weighted by $-1$. In this way, 
the colored degree of a vertex $v$ is the weight sum of ends that incident with $v$.

\subsection{The structural description}

A set $\{h_1,h_2,\ldots,h_m\}$ of increasing integers is said to be {\em allowed}
if $h_{i+1}-h_i\le2$ for all $1\le i\le m-1$.
Let $H\colon V(G)\to 2^\Z$ be an allowed set function, that is, the set $H(v)$ is allowed for each vertex $v$.
A factor~$F$ of the graph~$G$ is said to be a {\em colored $H$-factor}
if $\Phi_F(v)\in H(v)$ for each vertex~$v$.
The distance of the colored degree~$\Phi_F(v)$ from the set $H(v)$ is defined by
\[
\dist_F(v,\,H(v))\=\min\bigl\{|\Phi_F(v)-h|\,\colon\,h\in H(v)\bigr\}.
\]
Lov\'asz \cite{Lov72} introduced the functions
\begin{align}
\delta_H(F)&\=\sum\limits{_{v\in V(G)}}\dist_F(v,\,H(v)),\rmand\label[def]{def:deltaH}\\
\delta(H)&\=\min\{\delta_H(F)\,\colon\,\mbox{$F$ is a subgraph of~$G$}\}.\label[def]{def:delta}
\end{align}
The factor~$F$ is said to be $H$-optimal if $\delta_H(F)=\delta(H)$.
It is an $H$-factor if and only if $\delta_H(F)=0$.
Denote
\begin{equation}\label[def]{def:I}
I_H (v)\=\{\Phi_F(v)\,\colon\,\hbox{$F$ is an $H$-optimal subgraph}\}.
\end{equation}
The vertex set $V(G)$ can be decomposed as
$V(G)=A_H\sqcup B_H\sqcup C_H\sqcup D_H$,
where
\begin{align*}
C_H&\=\{v\in V(G)\,\colon\,I_H(v)\subseteq H(v)\},\\[5pt]
A_H&\=\{v\in V(G)\backslash C_H\,\colon\,\min I_H(v)\ge \max H(v)\},\\[5pt]
B_H&\=\{v\in V(G)\backslash C_H\,\colon\,\max I_H(v)\le \min H(v)\},\rmand\\[5pt]
D_H&\=V(G)\backslash A_H\backslash B_H\backslash C_H.
\end{align*}
The $4$-tuple $(A_H,B_H,C_H,D_H)$ is said to be the {\em $H$-decomposition} of~$G$.
In~\cite[Corollary (2.4)]{Lov72},
Lov\'asz gave the next result.

\begin{lem}[Lov\'asz]\label{lem:CD}
The graph $G$ has no edge between the vertex subsets $C_H$ and $D_H$.
\end{lem}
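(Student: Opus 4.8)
The plan is to argue by contradiction via a local exchange (augmenting) argument on optimal subgraphs. Suppose there is an edge $e$ joining a vertex $u\in C_H$ to a vertex $w\in D_H$. The strategy is to start from an $H$-optimal subgraph $F$ that realizes an extreme colored degree at $w$, and then to modify $F$ along the edge $e$ (and possibly a short alternating structure through $e$) so as to push $\Phi(w)$ toward $H(w)$ without paying at $u$, thereby strictly decreasing $\delta_H$ and contradicting optimality of $\delta(H)$.

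First I would unpack what membership in $D_H$ means. By the definition of the $H$-decomposition, $w\in D_H$ exactly when $w\notin C_H$ and $w$ lies in neither $A_H$ nor $B_H$; concretely, the interval $I_H(w)$ straddles $H(w)$, so there exist $H$-optimal subgraphs $F_-$ and $F_+$ with $\Phi_{F_-}(w)<\min H(w)$ and $\Phi_{F_+}(w)>\max H(w)$ (using that the set $I_H(w)$ of attainable optimal colored degrees is, by a standard interchange argument on the symmetric difference of two optimal subgraphs, an interval of integers whose gaps are at most two). The key point I want is that $w$ can be driven through its whole allowed set by optimal subgraphs, whereas $u\in C_H$ has $I_H(u)\subseteq H(u)$, so at $u$ every optimal subgraph already sits inside $H(u)$ and has distance zero.

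Next I would run the exchange across $e=uw$. Take an optimal $F$ attaining the value of $\Phi_F(w)$ closest to $H(w)$ from the deficient side; $\dist_F(w,H(w))\ge1$ by hypothesis that $w\notin C_H$ on that side. Flip the contribution of the single end of $e$ at $w$ (toggle its color, or add/delete $e$, depending on whether $e\in F$) to move $\Phi(w)$ one step toward $H(w)$; this changes $\Phi(u)$ by the opposite unit. The gain at $w$ is exactly $1$; the only possible loss is at $u$. Since $u\in C_H$ and the allowed sets have gaps at most two, the unit change at $u$ increases $\dist_F(u,H(u))$ by at most $1$, and in fact I can choose the optimal representative so that $\Phi_F(u)$ is not at the boundary of $H(u)$ in the harmful direction, making the net change in $\delta_H$ strictly negative. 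This contradicts $\delta_H(F)=\delta(H)$, proving no such edge exists.

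The hard part will be controlling the side effect at the $C_H$-endpoint $u$: a single toggle at $e$ might push $\Phi(u)$ out of $H(u)$, so a naive one-edge flip need not decrease $\delta_H$. The genuine content of the argument is therefore the interval/parity structure of $I_H$ together with the freedom to pick, among all optimal subgraphs, one whose value at $u$ has slack inside $H(u)$ on the relevant side; establishing that such a representative exists (equivalently, that $u\in C_H$ forces $\min I_H(u)>\min H(u)$ or $\max I_H(u)<\max H(u)$ compatibly with the direction of the flip) is where the allowedness hypothesis and the definition of $C_H$ must be used carefully, and this is the step I expect to require the most attention.
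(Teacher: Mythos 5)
The paper does not prove this lemma; it imports it verbatim as Corollary (2.4) of Lov\'asz \cite{Lov72}, so there is no in-paper proof to compare against. Judged on its own terms, your sketch contains genuine gaps. First, your reading of $w\in D_H$ is wrong: the definition only says that $I_H(w)\not\subseteq H(w)$, that $\min I_H(w)<\max H(w)$, and that $\max I_H(w)>\min H(w)$. This does \emph{not} produce optimal subgraphs $F_-$, $F_+$ with $\Phi_{F_-}(w)<\min H(w)$ and $\Phi_{F_+}(w)>\max H(w)$; for instance $I_H(w)$ may sit entirely inside the convex hull of $H(w)$ and merely hit a gap of $H(w)$ (this is exactly the situation \cref{lem:interval} is designed to describe). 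So the ``straddling'' picture on which your exchange is premised is not available.

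Second, the exchange step itself is underspecified at the point where it actually matters. The $2$-end-coloring is fixed, so ``toggle its color'' is not a legal move; the only legal move on $e=uw$ is to add it if $e\notin F$ or delete it if $e\in F$, and whichever of these is available changes $\Phi(w)$ by a \emph{forced} amount ($-e(w)$ or $+e(w)$) that may well point away from $H(w)$. Nothing in your argument guarantees an optimal $F$ for which the available toggle of this particular edge decreases $\dist_F(w,H(w))$; securing that is precisely where Lov\'asz needs alternating trails and the full structure theorem, and it cannot be replaced by a one-edge flip. Ironically, the part you flag as hard --- controlling the side effect at $u$ --- is the easy part and needs no choice of a ``slack'' representative: if the toggle lowers $\dist(w)$ by $1$ and raises $\dist(u)$ from $0$ to $1$, the new subgraph is still optimal and realizes a colored degree at $u$ outside $H(u)$, contradicting $I_H(u)\subseteq H(u)$ directly; if it raises $\dist(u)$ by $0$, optimality of $F$ is contradicted. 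So the dichotomy at $u$ is automatic from the definition of $C_H$, while the genuine difficulty --- steering $\Phi(w)$ in the right direction --- is the step your proposal assumes rather than proves.
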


Let $X$ and $Y$ be disjoint vertex subsets of the graph $G$.
Denote by $E(X,Y)$ the set of edges with one end in~$X$ and the other end in~$Y$.
Define the set function 
\hbox{$H_{X,Y}\colon V(G)-X-Y \to 2^\Z$}
by
\[
H_{X,Y}(z)\=H(z)-\sum_{e\colon\,e(Y)-e(X)=1}e(z).
\]
In particular, we denote $H_X=H_{X,\emptyset}$. On the other hand, define
\[
\nu(X,Y)\=\sum_{e\colon e(Y)-e(X)\geq 1}|e(Y)-e(X)|
\rmand
\nu(X)\=\nu(X,\emptyset).
\]
Note that for any edge $e$,
a vertex in the set $X$ having a non-trivial contribution to the number $e(X)$ must be an end of $e$.
It follows that $e(X)\in\{0,\,\pm 1,\,\pm2\}$ for any edge $e$ and for any vertex subset $X$.
Therefore, we have 
\begin{align}
\nu(X)&\=\sum_{-e(X)\geq 1}|e(X)|\notag\\
&\=|\{e\in E(G)\colon e(X)=-1\}|+2|\{e\in E(G)\colon e(X)=-2\}|.\label{nu}
\end{align}
For any vertex subset $S$,
we denote by~$G[S]$ the subgraph induced by~$S$.
Denote the number of components of~$G[S]$ by~$c(S)$.
In~\cite[Theorem (4.3)]{Lov72}, Lov\'asz established a formula for the number $\delta(H)$.
\begin{thm}[Lov\'asz]\label{thm:deltaH}
We have
\[
\delta(H)
\=c(D_H)+\sum_{v\in B_H}\min H(v)-
\sum_{v\in A_H}\max H(v)-\nu(A_H,B_H).
\]
\end{thm}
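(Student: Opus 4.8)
The plan is to fix an $H$-optimal subgraph $F$, so that $\delta(H)=\delta_H(F)=\sum_{v\in V(G)}\dist_F(v,H(v))$, and then to evaluate the contribution of each class of the $H$-decomposition $(A_H,B_H,C_H,D_H)$ separately. By the very definition of $I_H$, every $H$-optimal subgraph has $\Phi_F(v)\in H(v)$ whenever $v\in C_H$, so these vertices contribute nothing. For $v\in A_H$ we have $\Phi_F(v)\gge\min I_H(v)\gge\max H(v)$, whence $\dist_F(v,H(v))=\Phi_F(v)-\max H(v)$; dually, for $v\in B_H$ one gets $\dist_F(v,H(v))=\min H(v)-\Phi_F(v)$. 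Hence
\[
\delta(H)\=\sum_{v\in B_H}\min H(v)-\sum_{v\in A_H}\max H(v)+\Bigl(\sum_{v\in A_H}\Phi_F(v)-\sum_{v\in B_H}\Phi_F(v)\Bigr)+\sum_{v\in D_H}\dist_F(v,H(v)),
\]
and it remains to identify the last two terms as $-\nu(A_H,B_H)$ and $c(D_H)$, respectively.

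For the $A_H,B_H$ term I would rewrite the degree sums as an edge sum, $\sum_{v\in A_H}\Phi_F(v)-\sum_{v\in B_H}\Phi_F(v)=\sum_{e\in E(F)}\bigl(e(A_H)-e(B_H)\bigr)$. Since the vertices of $A_H$ prefer a small colored degree while those of $B_H$ prefer a large one, the edges $e$ with $e(B_H)-e(A_H)\gge1$ are exactly those whose inclusion can only help, whereas those with $e(B_H)-e(A_H)\lle-1$ can only hurt. An exchange argument — toggling such edges one at a time and checking, via the extremal behaviour of $\Phi_F$ on $A_H\cup B_H$ recorded by $I_H$, that $\delta_H(F)$ never increases — lets me assume $F$ contains every edge of the first kind and none of the second. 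For such an $F$,
\[
\sum_{e\in E(F)}\bigl(e(A_H)-e(B_H)\bigr)\=-\!\!\sum_{e\colon\,e(B_H)-e(A_H)\ge1}\!\!\bigl(e(B_H)-e(A_H)\bigr)\=-\nu(A_H,B_H),
\]
as required. \Cref{lem:CD}, which forbids edges between $C_H$ and $D_H$, is what keeps this local surgery from disturbing the classes it should leave alone.

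The identity $\sum_{v\in D_H}\dist_F(v,H(v))=c(D_H)$ is the heart of the matter and the step I expect to be the main obstacle. For a single component $K$ of $G[D_H]$, every non-loop edge inside $K$ contributes an even amount ($0$ or $\pm2$) to $\sum_{v\in K}\Phi_F(v)$, and so does every loop; therefore
\[
\sum_{v\in K}\Phi_F(v)\equiv\bigl|\{e\in E(F)\colon e\text{ joins }K\text{ to }V(G)\setminus K\}\bigr|\pmod 2.
\]
Using that each $H(v)$ is allowed and that, for $v\in D_H$, the achievable colored degrees $I_H(v)$ straddle $H(v)$, one shows this parity is incompatible with simultaneously placing every $\Phi_F(v)$ with $v\in K$ inside $H(v)$ while keeping the subgraph optimal; this forces $\sum_{v\in K}\dist_F(v,H(v))\gge1$, and the optimality of $F$ forbids any component from contributing more than a single unit. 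Summing over the $c(D_H)$ components of $G[D_H]$ then yields the claimed value. Carrying out this parity analysis rigorously, and arranging a single optimal subgraph for which the edge surgery of the previous step and this per-component count are \emph{simultaneously} tight, is the delicate part of the argument; the underlying engine is the alternating-forest augmentation that detects when a subgraph fails to be optimal.
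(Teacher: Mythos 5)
The paper gives no proof of this statement to compare against: it is quoted verbatim from Lov\'asz \cite{Lov72} (his Theorem (4.3)), and is used here as a black box. Judged on its own, your opening reduction is sound: vertices of $C_H$ contribute nothing, and for $v\in A_H$ (resp.\ $B_H$) the distance equals $\Phi_F(v)-\max H(v)$ (resp.\ $\min H(v)-\Phi_F(v)$), which isolates exactly the two identities $\sum_{v\in A_H}\Phi_F(v)-\sum_{v\in B_H}\Phi_F(v)=-\nu(A_H,B_H)$ and $\sum_{v\in D_H}\dist_F(v,H(v))=c(D_H)$ for a suitable optimal $F$. But these two identities \emph{are} the theorem, and the arguments you sketch for them do not close.

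For the first, the one-edge toggle is not monotone. Take $v\in B_H$ with $\Phi_F(v)=\min H(v)$ (membership in $B_H$ only forces $\max I_H(v)\le\min H(v)$, so equality can occur for some optimal $F$) and an edge $e$ with a red end at $v$ and its other end at $u\in D_H$; then $e(B_H)-e(A_H)=1$, yet adding $e$ can raise the distance at $v$ and at $u$ simultaneously, increasing $\delta_H(F)$ by $2$. Producing one optimal subgraph that contains every edge with $e(B_H)-e(A_H)\ge1$ and none with $e(B_H)-e(A_H)\le-1$ is precisely what Lov\'asz's alternating-forest analysis delivers; it cannot be replaced by local surgery. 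For the second, parity is the wrong engine for a general allowed $H$: if $H(v)$ is an interval of integers, there is no parity obstruction at all to having $\Phi_F(v)\in H(v)$ for every $v$ in a component $K$ of $G[D_H]$. The true statement --- that each such component is $H_{A_H,B_H}$-critical and hence contributes exactly $1$ --- is \cref{lem:critical,lem_delta=1} (Lov\'asz's Theorems (4.1)--(4.2)), and its proof is again the augmenting-path machinery, not parity. (A parity argument does work for the specific function $J_f^*$ used later in this paper, which is likely the source of the confusion, but the present theorem is stated for arbitrary allowed $H$.) You also correctly flag, but do not resolve, the need for a single optimal $F$ that is tight for both counts at once. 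As it stands the proposal is a correct bookkeeping reduction resting on two unproved claims, so there is a genuine gap.
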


For any set $Y$ of integers,
we denote its convex hull by $[Y]=\{y\colon \min Y\le y\le \max Y\}$.
In~\cite[Theorem~(2.1)]{Lov72},
Lov\'asz gave the following property for the vertex subset $D_H$.

\begin{lem}[Lov\'asz]\label{lem:interval}
Suppose that $D_H\neq\emptyset$. Let $v\in D_H$.
Then the set $I_H(v)$ is an allowed set.
Moreover, we have
\begin{itemize}
\item[(i)]
if $\{u\pm 1\}\subseteq I_H(v)$ and $u\not\in I_H(v)$, then $u\in H(v)$ and $u\pm 1\not\in H(v)$;
\smallskip
\item[(ii)]
neither the intersection $[I_H(v)]\cap H(v)$ nor the difference $[I_H(v)]\backslash H(v)$
contains a pair of consecutive integers.
\end{itemize}
\end{lem}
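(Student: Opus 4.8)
The plan is to deduce all three assertions from a single interpolation (``shifting'') principle for $H$-optimal subgraphs, read against the shape of the distance functions $d_w(x):=\dist(x,H(w))$. Each $d_w$ vanishes exactly on $H(w)$, changes by unit steps, and---because $H(w)$ is allowed---never exceeds $1$ at a gap of $H(w)$; moreover $\delta_H(F)=\sum_{w}d_w(\Phi_F(w))$, so $I_H(v)$ is precisely the set of colored degrees $\Phi_F(v)$ attained as $F$ ranges over the minimizers of $\delta_H$.

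First I would establish the shifting lemma: if $F$ and $F'$ are $H$-optimal with $\Phi_F(v)<\Phi_{F'}(v)$, then some $H$-optimal $F''$ satisfies $\Phi_F(v)<\Phi_{F''}(v)\le\Phi_F(v)+2$. For this, consider the symmetric difference $M=F\triangle F'$, record for each of its edges whether it is added to or removed from $F$, and decompose $M$ into closed trails and open trails whose endpoints are the imbalanced vertices $w$ with $\Phi_{F'}(w)\ne\Phi_F(w)$ (note $v$ is one such vertex). Toggling a closed trail based at $v$---in particular a loop at $v$---raises $\Phi(v)$ by $2$ and leaves every other colored degree fixed; toggling an open trail from $v$ to another imbalanced vertex $w'$ raises $\Phi(v)$ by $1$ and alters only $\Phi(w')$ besides, the interior colored degrees being untouched. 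In either case the unit-slope shape of the $d_w$ together with the optimality of both $F$ and $F'$ forces the resulting subgraph to remain a minimizer, and choosing a raising move of least size yields $F''$. The allowed property of $I_H(v)$ is then immediate: consecutive attained values differ by at most $2$.

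Parts (i) and (ii) follow by localizing the shifting lemma. For (i), suppose $u\pm1\in I_H(v)$ while $u\notin I_H(v)$, so $u$ is an isolated hole of $I_H(v)$. No optimal raising move from value $u-1$ can be of size $1$, since it would realize $u$; thus $\Phi(v)$ can cross the hole only by a size-$2$ step. Using that a size-$1$ move is inevitably coupled to a neighbor whereas a size-$2$ move can be localized at $v$, and examining the profiles of $d_v$ on $\{u-1,u,u+1\}$ that are compatible with its unit slopes and with values in $\{0,1\}$ on $[H(v)]$, one is left with $d_v(u)=0$ and $d_v(u\pm1)=1$, that is, $u\in H(v)$ and $u\pm1\notin H(v)$. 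For (ii), I would argue by contradiction: a pair of consecutive integers of $[I_H(v)]$ lying both in $H(v)$, or both outside $H(v)$, would---via the shifting lemma and the hole description in (i)---produce either a forbidden profile of $d_v$ or an $H$-optimal subgraph contradicting $v\in D_H$, which guarantees that $I_H(v)$ genuinely straddles $H(v)$. Hence across $[I_H(v)]$ membership in $H(v)$ strictly alternates, which is exactly the interleaving asserted in (ii).

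The hard part will be the endpoint bookkeeping in the shifting lemma: moving $\Phi(w')$ toward its $F'$-value along an open trail can temporarily raise $d_{w'}$, so optimality cannot be verified edge by edge. I expect to resolve this by selecting the trail and its orientation so that the coupled quantity $d_v+d_{w'}$ interpolates between two minimizers of $\delta_H$ with each summand varying by at most $1$ per step, thereby precluding any net increase; the same global---rather than local---use of optimality is what rules out the degenerate distance profiles in (i) and (ii). Tracking the signed (colored) contributions of edges and the special role of loops throughout is where I anticipate the real work to concentrate.
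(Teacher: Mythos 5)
The paper does not prove \cref{lem:interval}: it is imported verbatim from Lov\'asz \cite[Theorem (2.1)]{Lov72}, so there is no in-paper argument to compare yours against; the relevant benchmark is Lov\'asz's original proof, which, like your sketch, runs an exchange argument on symmetric differences of optimal subgraphs. Your plan is therefore aimed in the right general direction, but the step on which everything rests is false as you state it. You claim that for $H$-optimal $F,F'$ the decomposition of $F\triangle F'$ into trails yields a single trail whose toggle again produces a minimizer, ``forced'' by the unit-slope shape of the distance functions and the optimality of both endpoints. The function $d_w(x)=\dist(x,H(w))$ is $1$-Lipschitz but \emph{not convex} precisely when $H(w)$ has a gap of size two (e.g.\ $H(w)=\{0,2\}$ gives $d_w(0)=d_w(2)=0$ and $d_w(1)=1$), so the changes in $\delta_H$ are not additive over the trails: from $\delta_H(F)=\delta_H(F')=\delta(H)$ and $\delta_H(F\triangle T_i)\ge\delta(H)$ one cannot conclude $\delta_H(F\triangle T_i)=\delta(H)$ for any $i$, since two toggles can each strictly increase a summand $d_{w'}$ while their composition restores it. This non-convexity is exactly the difficulty that the ``allowed'' hypothesis exists to confront; repairing the argument needs an additional extremal choice (Lov\'asz takes $F,F'$ minimizing the size of the symmetric difference and derives contradictions from trails that would shrink it), not just ``a raising move of least size.''

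Two further gaps. First, in the colored setting an alternating trail does not automatically fix the colored degrees of its interior vertices: at an interior vertex the added end and the removed end may carry opposite colors, shifting $\Phi$ there by $\pm 2$, so the trail decomposition itself requires the signed bookkeeping you explicitly defer --- it is not a routine adaptation. Second, parts (i) and (ii) are where the substance of the lemma lies, and your treatment of them (``one is left with $d_v(u)=0$ \ldots'', ``would produce either a forbidden profile or an $H$-optimal subgraph contradicting $v\in D_H$'') restates the desired conclusions rather than deriving them; in particular, to get $u\in H(v)$ in (i) you must actually construct an optimal subgraph realizing $\Phi(v)=u$ from ones realizing $u\pm1$, or show why its nonexistence pins down the profile of $d_v$, and a closed trail contributing $+2$ at $v$ cannot simply be ``half-toggled'' to land on $u$. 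As it stands the proposal is a plausible outline of the strategy, not a proof.
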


The graph $G$ is said to be {\em $H$-critical} if it is connected and $D_H=V(G)$.
Lov\'asz~\cite[Lemma~(4.1)]{Lov72} showed the following property.
\begin{lem}[Lov\'asz]\label{lem:critical}
If $G$ is an $H$-critical graph, then $\delta(H)=1$.
\end{lem}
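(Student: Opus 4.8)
The plan is to read the claim off Lov\'asz's formula in \cref{thm:deltaH}. By definition, an $H$-critical graph $G$ is connected with $D_H = V(G)$, so the three remaining classes of the $H$-decomposition are empty: $A_H = B_H = C_H = \emptyset$. I would substitute this into
\[
\delta(H) = c(D_H) + \sum_{v \in B_H} \min H(v) - \sum_{v \in A_H} \max H(v) - \nu(A_H, B_H)
\]
and check which terms survive.

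First I would observe that the two sums $\sum_{v \in B_H} \min H(v)$ and $\sum_{v \in A_H} \max H(v)$ are empty, hence both vanish. Next I would dispose of the correction term: with $A_H = B_H = \emptyset$ one has $e(A_H) = e(B_H) = 0$ for every edge $e$, since only ends of $e$ contribute to $e(X)$ and the empty set contains none; thus $e(B_H) - e(A_H) = 0$ never meets the threshold $\ge 1$ in the defining sum, so $\nu(A_H, B_H) = \nu(\emptyset, \emptyset) = 0$. What is left is $\delta(H) = c(D_H) = c(V(G))$.

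Finally I would invoke connectivity: as $G$ is connected, the induced subgraph $G[V(G)] = G$ has a single component, so $c(V(G)) = 1$ and therefore $\delta(H) = 1$.

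Because all the work is already encapsulated in \cref{thm:deltaH}, I do not anticipate a genuine obstacle; the only care needed is the bookkeeping of empty sums and the evaluation $\nu(\emptyset, \emptyset) = 0$. The one point worth flagging is logical order: in Lov\'asz's original treatment this lemma precedes the formula, so a self-contained argument would instead bound $\delta(H)$ from both sides---the inequality $\delta(H) \ge 1$ because $\delta(H) = 0$ would make every $H$-optimal subgraph an $H$-factor and force $C_H = V(G)$, contradicting $D_H = V(G)$, and the reverse inequality by exhibiting a subgraph $F$ with $\delta_H(F) = 1$. Within the present ordering, however, \cref{thm:deltaH} is available and the substitution above settles the matter in one line.
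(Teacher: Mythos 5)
The paper does not prove \cref{lem:critical} at all: it imports the statement verbatim from Lov\'asz \cite{Lov72} (Lemma (4.1) there), just as it imports the deficiency formula \cref{thm:deltaH} (Theorem (4.3) there). So there is no in-paper argument to compare yours against, and your attempt must stand on its own. As a formal substitution your computation is fine: with $A_H=B_H=C_H=\emptyset$ the two sums and $\nu(\emptyset,\emptyset)$ vanish, and $c(D_H)=c(V(G))=1$ by connectivity. The difficulty is the circularity you yourself flag, and it is not a cosmetic matter of presentation order: in Lov\'asz's development the formula of \cref{thm:deltaH} is \emph{proved from} Lemma (4.1) together with Theorem (4.2) (quoted in this paper as \cref{lem_delta=1}); the term $c(D_H)$ appears in the formula precisely because each component of $G[D_H]$ is critical and hence contributes deficiency exactly $1$ by this very lemma. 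Deriving the lemma back out of the formula therefore establishes nothing.

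Your proposed repair covers only the easy half. The bound $\delta(H)\ge 1$ is correct as you sketch it: if $\delta(H)=0$ then every $H$-optimal subgraph is an $H$-factor, so $I_H(v)\subseteq H(v)$ for every vertex $v$, forcing $C_H=V(G)$ and contradicting $D_H=V(G)$. But the substance of the lemma is the upper bound $\delta(H)\le 1$, i.e., that a critical graph admits a subgraph whose total distance from $H$ is exactly one unit, concentrated at a single vertex; your sketch reduces this to ``exhibiting a subgraph $F$ with $\delta_H(F)=1$,'' which is the assertion restated rather than proved. Establishing it requires Lov\'asz's alternating-trail argument showing that the deficiency of a connected graph with $D_H=V(G)$ can be localized at one vertex. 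Since the present paper uses the lemma only as a cited black box, the honest course is to cite it as such; if a self-contained proof is wanted, the missing ingredient is precisely that upper bound, and it cannot be obtained from \cref{thm:deltaH}.
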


In \cite[Theorem~(4.2)]{Lov72},
he also showed that any component of the subgraph $G[D_H]$ is $H_{A_H,B_H}$-critical.
In view of \cref{lem:critical}, this can be stated as follows.

\begin{lem}[Lov\'asz]\label{lem_delta=1}
Suppose that $D_H\neq\emptyset$.
Then for any $H'$-optimal subgraph~$F$ of any component of the subgraph~$G[D_H]$,
we have $\delta_{H'}(F)=1$, where $H'=H_{A_H,B_H}$.
\end{lem}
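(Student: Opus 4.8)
The plan is to obtain the statement as a direct consequence of the two results of Lovász recalled just above, his Theorem~(4.2) and \cref{lem:critical}. Since $D_H\neq\emptyset$, the induced subgraph $G[D_H]$ has at least one component; I would fix an arbitrary such component $K$ and write $H'=H_{A_H,B_H}$ for the shifted set function, viewed as restricted to $V(K)\subseteq D_H$.

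First I would invoke Lovász's Theorem~(4.2), which guarantees that $K$ is $H'$-critical. Unwinding the definition of an $H'$-critical graph, this says precisely that $K$ is connected and that its $H'$-decomposition satisfies $D_{H'}=V(K)$. Applying \cref{lem:critical} to the pair $(K,H')$ then yields $\delta(H')=1$, where $\delta(H')$ is computed relative to $K$ as in \cref{def:delta}. Finally, since $F$ is by hypothesis an $H'$-optimal subgraph of $K$, the definition of optimality gives $\delta_{H'}(F)=\delta(H')$, and hence $\delta_{H'}(F)=1$, as claimed.

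The step demanding the most care is not a computation but the bookkeeping of the objects involved: one must consistently interpret criticality, optimality, and the functions $\delta_{H'}$ and $\delta(H')$ with respect to the individual component $K$ and the shifted function $H'$, rather than with respect to the ambient graph $G$ and the original function $H$. All of the substantive content---that each component of $G[D_H]$ becomes critical once the prescription is shifted from $H$ to $H'=H_{A_H,B_H}$---is furnished by Lovász's Theorem~(4.2), which I take as given; the present lemma merely repackages it through \cref{lem:critical}.
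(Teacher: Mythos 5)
Your proposal is correct and matches the paper exactly: the paper derives this lemma by citing Lov\'asz's Theorem~(4.2) (each component of $G[D_H]$ is $H_{A_H,B_H}$-critical) and then applying \cref{lem:critical} together with the definition of optimality, which is precisely your argument.
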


\section{Main result}\label{sec:main}

This section devotes to the main results of this paper.

Let $G$ be a graph with two vertex subsets $S$ and $T$. 
We denote by $E(S,T)$ the set of edges $e$ with
one end in $S$ and the other end in $T$. When $T=V(G)\setminus S$, 
we use the notation $\partial(S)=\partial (T)=E(S,T)$.
Let $f\colon V(G)\to \Z^+$ be a function. Define
\begin{align}
J_f^*(v)&\=J_f(v)\cup \{-1,\,-3,\,-5,\,\ldots\}\notag\\
&\=\begin{cases}
\{\ldots,\,-5,\,-3,\,-1,\,1,\,3,\,\ldots,\,f(v)\},&\text{if $f(v)$ is odd}\\[4pt]
\{\ldots,\,-5,\,-3,\,-1,\,1,\,3,\,\ldots,\,f(v)-1,\,f(v)\},&\text{if $f(v)$ is even}
\end{cases}\label[def]{def:Jf*}
\end{align}
for each vertex $v$ of $G$.
From definition, we see that the set function $J_f^*$ is allowed.
Throughout this section, 
we let $(A,B,C,D)$ be the $J_f^*$-decomposition of $G$.
Denote $I=I_{J_f^*}$; see \cref{def:I}.

\Cref{thm:main:even,thm:main:odd} treat graphs of even and odd orders respectively.

\begin{thm}\label{thm:main:even}
Let $G$ be a connected general graph.
Let $f\colon V(G)\to \Z^+$ be a function.
Then the graph $G$ satisfies \cref{cond:f}, i.e.,
\[
o(G-S)\lle f(S)\qquad\text{for all vertex subsets $S$}.
\]
if and only if it contains a colored $J_f^*$-factor for any $2$-end-coloring.
\end{thm}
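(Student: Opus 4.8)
The plan is to prove both directions by exploiting the Lov\'asz machinery for the allowed set function $J_f^*$, with the colored-degree interpretation doing the work of converting ``number of odd components'' into ``distance from a factor''. For the easier direction, suppose $G$ fails \cref{cond:f}, so there is a vertex subset $S$ with $o(G-S)>f(S)$. I would construct a specific $2$-end-coloring that is adverse: color every end incident with $S$ green and every other end red, so that the colored degree of each vertex $v\in S$ is forced to be very negative while vertices outside $S$ see large positive colored degrees. The point of the set $J_f^*(v)$ is that it is bounded above by $f(v)$ but unbounded below through the negative odd integers, so a vertex can always absorb arbitrarily negative colored degree \emph{except} that on each odd component of $G-S$ some parity obstruction forces the colored degree to miss $J_f^*$. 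Counting these obstructions against the capacity $f(S)$ of $S$, the inequality $o(G-S)>f(S)$ should make $\delta(J_f^*)>0$ for this coloring, so no colored $J_f^*$-factor exists, establishing the contrapositive.

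For the main direction, assume \cref{cond:f} holds and fix an arbitrary $2$-end-coloring; I must produce a colored $J_f^*$-factor, equivalently show $\delta(J_f^*)=0$. Here I would invoke \cref{thm:deltaH}, which gives
\[
\delta(J_f^*)\=c(D)+\sum_{v\in B}\min J_f^*(v)-\sum_{v\in A}\max J_f^*(v)-\nu(A,B),
\]
and argue that \cref{cond:f} forces the right-hand side to be $\le 0$; since $\delta(J_f^*)\ge 0$ always, this yields $\delta(J_f^*)=0$. The set $J_f^*(v)$ has $\max J_f^*(v)=f(v)$, while $\min J_f^*(v)=-\infty$ unless $B$ is empty; so the first step is to show $B=\emptyset$. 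This should follow because $J_f^*(v)$ is unbounded below: no optimal subgraph can have its entire colored-degree interval $I(v)$ lying at or below $\min J_f^*(v)$, since one can always delete a green edge or add a red one to raise the colored degree toward the set. With $B=\emptyset$ the formula reduces to $\delta(J_f^*)=c(D)-\sum_{v\in A}f(v)-\nu(A)$, and I would interpret $A$ as the analog of the Tutte set $S$ and the components of $G[D]$ as the odd components of $G-A$.

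The crux is then a counting argument tying $c(D)$ and $\nu(A)$ to $o(G-A)$ and $f(A)$. Using \cref{lem:CD} there are no edges between $C$ and $D$, so the components of $G[D]$ are genuinely cut off from $C$ and meet the rest of $G$ only through $A$; using \cref{lem_delta=1} each such component is critical with local deficiency $1$, which by \cref{lem:interval} pins down a parity of its colored degree and shows that each component contributes an odd-component-like obstruction unless it is bridged to $A$ by an edge contributing to $\nu(A)$. The quantity $\nu(A)$, as unpacked in \cref{nu}, counts exactly the green-weighted edges meeting $A$, so I would show $c(D)-\nu(A)\le o(G-A)$ by charging each critical $D$-component either to an odd component of $G-A$ or to an edge into $A$. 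Combined with $\sum_{v\in A}f(v)=f(A)\ge o(G-A)$ from \cref{cond:f}, this gives $\delta(J_f^*)\le o(G-A)-f(A)-\bigl(\nu(A)-(c(D)-o(G-A))\bigr)\le 0$. I expect the main obstacle to be this charging step: reconciling the colored/parity bookkeeping of $c(D)$ and $\nu(A)$ with the purely combinatorial count $o(G-A)$ across all $2$-end-colorings simultaneously, and in particular handling green loops and multiple green edges at a vertex of $A$ correctly so that the $\nu(A)$ term neither over- nor under-counts the obstructions it is meant to cancel.
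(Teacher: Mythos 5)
Your sufficiency argument is, in outline, the paper's own: show $B=\emptyset$ because $J_f^*$ is unbounded below, reduce Lov\'asz's formula to $\delta(J_f^*)=c(D)-f(A)-\nu(A)$, and then charge all but at most $\nu(A)$ components of $G[D]$ to odd components of $G-A$. The step you flag as the ``main obstacle'' is indeed where the real work lies: one must show that every component of $G[D]$ joined to $A$ only by edges with red ends in $A$ has \emph{odd order}. The paper does this by first bounding $\max I(v)\le f(v)+1$ (resp.\ $f(v)-1$) via \cref{lem:interval}, then observing that in an optimal subgraph of such a component exactly one vertex has even colored degree while the total colored-degree sum is even. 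You have not supplied this, but the plan is sound and matches the paper.

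The necessity direction, however, contains a genuine error: your coloring is reversed, and with it the argument fails. You color the ends at $S$ green and all other ends red, and then appeal to ``the capacity $f(S)$ of $S$''. But a vertex $v\in S$ with all ends green has colored degree $-\deg_F(v)$, and since $J_f^*(v)$ contains every negative odd integer, $v$ has \emph{no} capacity bound at all; the upper bound $f(v)$ only bites when the ends at $v$ are red. Concretely, take $G=K_{1,3}$ with center $c$, $f\equiv 1$, and $S=\{c\}$, so $o(G-S)=3>1=f(S)$. Under your coloring the spanning subgraph $F=G$ has $\Phi_F(c)=-3\in J_f^*(c)$ and $\Phi_F(x)=1\in J_f^*(x)$ for each leaf $x$, i.e.\ $G$ has a colored $J_f^*$-factor for your coloring even though \cref{cond:f} fails. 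The correct witness is the opposite coloring (red on ends at $S$, green elsewhere): then each vertex of $S$ is capped at $f(v)$, while an odd component of $G-S$ sending no $F$-edge to $S$ must contain a vertex of even degree whose colored degree is a negative even integer, hence outside $J_f^*$; this forces $|\partial_F S|\ge o(G-S)>f(S)$ and overloads some vertex of $S$. Swapping the two colors repairs your argument, but as written it does not prove the necessity.
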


\begin{proof}
{\bf Necessity}.
Suppose that $G$ contains a vertex subset $S$ such that 
\begin{equation}\label[ineq]{pf11}
o(G-S)\gge f(S)+1.
\end{equation}
We shall show that there exists a $2$-end-coloring for which $G$ has no $J_f^*$-factors.
We consider the $2$-end-coloring defined by that an end is colored in red 
if and only if its incident vertex belongs to the set $S$.
Let $F$ be a $J_f^*$-factor of $G$.

Assume that $E(F)\cap (\partial_F C)=\emptyset$ for some odd component of the subgraph $G-S$.
By parity argument, the component $C$ contains a vertex $v$ having an even degree in $F$.
From definition, every edge having $v$ as an end contributes 
the weight $-1$ to the colored degree of $v$. Thus the colored degree of $v$ in the factor $F$ is a negative even integer,
contradicting \cref{def:Jf*} of the function $J_f^*$.

Otherwise, we have $E(F)\cap (\partial C)\ne\emptyset$ for every odd component $C$ of $G-S$.
It follows that $|\partial_F S|\ge o(G-S)\ge f(S)+1$ by \cref{pf11}.
Consequently, there exists a vertex $u\in S$ such that $|\partial_F(u)|\ge f(u)+1$.
Since the colored degree of $u$ in the factor $F$ is at least $|\partial_F(u)|$,
we infer that the vertex~$u$ has colored degree larger than $f(v)$ in $F$,
contradicting \cref{def:Jf*} of the function $J_f^*$ again.

\medskip
\noindent{\bf Sufficiency}.
By way of contradiction,
let $G$ be a graph satisfying \cref{cond:f} without colored $J_f^*$-factors.
From \cref{def:delta}, we have
\begin{equation}\label[ineq]{deltaH>0:even}
\delta(J_f^*)\;>\;0.
\end{equation}

Assume that $B\ne\emptyset$.
Let $v_B\in B$.
From definition, we see that
\[
\Phi_F(v_B)\lle \max I(v_B)\lle \min J_f^*(v_B)
\]
for any $J_f^*$-optimal subgraph~$F$, contradicting the definition of the function~$J_f^*$.
Therefore, we have
\begin{equation}\label{B=0:even}
B\=\emptyset.
\end{equation}
As a consequence, \cref{thm:deltaH,deltaH>0:even} imply that
\begin{equation}\label[ineq]{cD>2A:even}
c(D)\;>\;\sum_{v_A\in A}\max J_f^*(v_A)+\nu(A)\=f(A)+\nu(A).
\end{equation}

From \cref{nu}, we can infer that 
\[
|\{e\in E(A,D)\colon e(A)=-1\}|\lle|\{e\in E(G)\colon e(A)=-1\}|\lle \nu(A).
\]
Consequently,
there are at most $\nu(A)$ components $T$ of the subgraph $G[D]$ such that 
\[
\{e\in E(A,T)\colon e(A)=-1\}\;\ne\;\emptyset.
\]
In other words, the subgraph $G[D]$ has at least $c(D)-\nu(A)$ components 
such that all edges connecting these components with $A$ have red ends in $A$.
Together with \cref{cD>2A:even},
we can suppose that the subgraph $G[D]$ has $q$ components $D_1$, $D_2$, $\ldots$, $D_q$ with $q>f(A)$,
such that 
\[
e(v_A)\ge 0\qquad\text{for each vertex $v_A\in A$ and for all edges $e\in \partial (D')$}, 
\]
where $D'=\cup_{i=1}^qD_i$.
In particular, we find $D'\neq \emptyset$. 
Let $v\in D'$. From definition, we have
\begin{equation}\label{HA=A:even}
(J_f^*)_{A}(v)
\=J_f^*(v)-\sum_{e\colon e(A)=-1}e(v)
\=J_f^*(v).
\end{equation}
By \cref{lem_delta=1,HA=A:even},
we have $\delta_{J_f^*}(F_i)=1$
for any $J_f^*$-optimal subgraph~$F_i$ of any component $D_i$, where $i\in[q]$.
In particular, there exists a unique vertex $v_0\in V(D_i)$ such that
\begin{equation}\label[rl]{v0:even}
\Phi_{F_i}(v_0)\;\not\in\; J_f^*(v_0).
\end{equation}

Since $v\in D'$, we have $v\not\in A\cup C$. From definition, we have
\begin{equation}\label[ineq]{minI:even}
\min I(v)\lle \max J_f^*(v)-1\=f(v)-1.
\end{equation}
We claim that 
\begin{equation}\label[ineq]{maxI:even}
\max I(v)
\lle\begin{cases}
f(v)-1,&\text{if $f(v)$ is even};\\[5pt]
f(v)+1,&\text{if $f(v)$ is odd}.
\end{cases} 
\end{equation}
In fact, \cref{maxI:even} can be shown by \cref{lem:interval,minI:even}.
We handle it in two cases according to the parity of $f(v)$.
 
\begin{casethm}\label{casethm:feven} $f(v)$ is even.

Assume that $f(v)\in I(v)$.
From \cref{lem:interval} (ii), we infer that $f(v)-1\not\in I(v)$.
Then, from \cref{lem:interval} (i), we deduce $f(v)-2\not\in I(v)$.
In view of \cref{minI:even}, we derive that the set $I(v)$ is not allowed, contradicting \cref{lem:interval}.
Therefore, we have $f(v)\not\in I(v)$.
Now, assume $\max I(v)\ge f(v)+1$.
Since the set $I(v)$ is allowed, in view of \cref{minI:even}, we infer that $\{f(v)\pm1\}\subseteq I(v)$.
From \cref{lem:interval} (i), we deduce that $f(v)-1\not\in J_f^*(v)$, a contradiction.
Therefore, we have 
$\max I(v)\le f(v)-1$.
\end{casethm}

\begin{casethm}\label{casethm:fodd} $f(v)$ is odd.

Suppose that there exists $r>f(v)+1$ such that $r\in I(v)$. Then there exists a $J_f^*$-optimal subgraph $F$ such that $\Phi_F(v)=r$.
Since $r\ge 3$, there exists an edge $e$ such that $e(v)>0$, that is, $e(v)\in\{1,2\}$.
Consider the subgraph $F-e$.

If $e(v)=2$, then the edge $e$ is a loop with two red ends.
In this case, the distance 
\[
\dist_{F-e}(v,\,J_f^*(v))
\=r-2-f(v)
\;<\;\dist_{F}(v,\,J_f^*(v))-2.
\]
In view of \cref{def:deltaH}, it follows that $\delta_{J_f^*}(F-e)<\delta_{J_f^*}(F)$,
contradicting the optimality of $F$.
Otherwise, we have $e(v)=1$. 
Then 
\[
\dist_{F-e}(v,\,J_f^*(v))
\=r-1-f(v)
\=\dist_{F}(v,\,J_f^*(v))-1.
\]
Let $e=uv$. Since 
\[
\dist_{F-e}(u,\,J_f^*(u))
\lle\dist_{F}(u,\,J_f^*(u))+1,
\]
we infer that $\delta_{J_f^*}(F-e)\le \delta_{J_f^*}(F)$ from \cref{def:deltaH}.
Since $F$ is optimal, namely $\delta_{J_f^*}(F-e)\ge \delta_{J_f^*}(F)$,
we deduce that the subgraph $F-e$ is optimal.
From \cref{def:I} of $I(v)$, we find $r-1\in I(v)$.
Now, we have $\{r-1,\,r\}\subseteq I(v)\setminus J_f^*(v)$,
contradicting \cref{lem:interval} (ii).
This proves \cref{maxI:even}. 
\end{casethm}

We shall show that the cardinalities $|D_i|$ are odd. 

For any $J_f$-optimal subgraph $F_i$, we have for $v\in V(D_i)$,
\begin{equation}\label[ineq]{Phi<=1}
\Phi_{F_i}(v)
\lle\begin{cases}
f(v)-1,&\text{if $f(v)$ is even};\\[5pt]
f(v)+1,&\text{if $f(v)$ is odd}.
\end{cases} 
\end{equation}
Recall from \cref{v0:even} that $\Phi_{F_i}(v)\;\not\in\; J_f^*(v)$ if and only if $v=v_0$.
We observe that the colored degree~$\Phi_{F_i}(v)$ is even if and only if $v=v_0$.
On the other hand, we claim that the colored degree sum
\[
\sum_{\mathllap{v\in V(D_i)}}\Phi_{F_i}(v)
\=\sum_{\text{$e=vv'\in E(F_i)$ is not a loop}}\!\!\!\!\!\!\!\!\!\!\!\!\!\!\!\!\bigl(e(v)+e(v')\bigr)
+\sum_{\text{$e\in E(F_i)$ is a loop centered at $v$}}\!\!\!\!\!\!\!\!\!\!\!\!\!\!\!\!e(v)
\]
is even. In fact, from \cref{def:cdeg}, 
the summand $e(v)+e(v')\in\{0,\pm2\}$ when the edge $e=vv'$ is not a loop,
and the summand $e(v)\in\{\pm2\}$ when the edge $e$ is a loop centered at $v$.
It follows that the component~$D_i$ is of odd order.
From \cref{lem:CD,cD>2A:even,B=0:even}, we derive that
\begin{equation}\label[ineq]{pf33}
o(G-A)\gge q\;>\;f(A),
\end{equation}
contradicting \cref{cond:f}. 
This completes the proof.
\end{proof}

It is easy to observe that when the integer $f(v)$ is odd for each vertex $v$, 
the graph $G$ contains a $(1,f)$-odd-factor if and only if it contains a colored $J_f^*$-factor
for any $2$-end-coloring. In this sense, \cref{thm:main:even} is a generalization of \cref{thm:CK}.
On the other hand, by coloring all ends in red,
the sufficiency part of \cref{thm:main:even} reduces to Egawa et al.'s \cref{thm:EKY}.

By taking $S=\emptyset$, we find \cref{cond:f} implies the evenness of the order $|G|$.
For graphs of odd orders, 
a slightest remedy to \cref{cond:f} might be removing the requirement for empty sets $S$.

\begin{thm}\label{thm:main:odd}
Let $G$ be a connected general graph of odd order.
Let $f\colon V(G)\to \Z^+$ be a function.
Then the graph $G$ satisfies the Tutte-type condition
\begin{equation}\label[cond]{cond:f:odd}
o(G-S)\le f(S)\qquad\text{for all non-empty vertex subsets $S$},
\end{equation}
if and only if for every $2$-end-coloring, either $G$ is $J_f^*$-critical, or $G$ contains a colored $J_f^*$-factor.
\end{thm}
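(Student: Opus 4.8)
The plan is to phrase both directions through the vertex class $A = A_{J_f^*}$ of the decomposition. Since $J_f^*(v)$ contains every negative odd integer it is unbounded below, so $B = \emptyset$ for every $2$-end-coloring, exactly as in \cref{thm:main:even}. I would first record the reformulation that, for a fixed coloring of the connected graph $G$, one has $A = \emptyset$ if and only if $G$ has a colored $J_f^*$-factor or $G$ is $J_f^*$-critical. Indeed, if $A = \emptyset$ then $V(G) = C \sqcup D$ with no edge between $C$ and $D$ by \cref{lem:CD}, so connectedness forces $C = \emptyset$ or $D = \emptyset$; the former gives $D = V(G)$ (criticality) and the latter gives $\delta(J_f^*) = 0$ (a factor). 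Conversely, a factor forces $I(v) \subseteq J_f^*(v)$ for all $v$, i.e.\ $C = V(G)$, and criticality means $D = V(G)$, each giving $A = \emptyset$. Thus it suffices to prove that $G$ satisfies \cref{cond:f:odd} if and only if $A = \emptyset$ for every $2$-end-coloring.

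For the ``if'' direction I would argue the contrapositive, reusing the argument of \cref{thm:main:even} almost verbatim. Suppose some coloring has $A \ne \emptyset$; then $\delta(J_f^*) > 0$ and $B = \emptyset$, so \cref{thm:deltaH} gives $c(D) > f(A) + \nu(A)$. The same $\nu$-counting, together with \cref{lem_delta=1}, \cref{lem:interval} and the parity computation, produces $q > f(A)$ components of $G[D]$ of odd order, whence $o(G - A) \ge q > f(A)$. As $A \ne \emptyset$, this violates \cref{cond:f:odd}; hence every coloring has $A = \emptyset$.

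For the ``only if'' direction I would again take the contrapositive. Given a non-empty $S$ with $o(G - S) \ge f(S) + 1$, I colour an end red exactly when its incident vertex lies in $S$. That $G$ has no colored $J_f^*$-factor follows precisely as in the necessity part of \cref{thm:main:even}, so it remains to prove that $G$ is not $J_f^*$-critical. Suppose it were. Then $\delta(J_f^*) = 1$ by \cref{lem:critical} and $D = V(G)$; fixing any $u \in S$, we have $u \in D$, so some $J_f^*$-optimal subgraph $F'$ satisfies $\Phi_{F'}(u) \le f(u) - 1$.

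The crux — and the step I expect to be hardest — is to contradict the existence of this $F'$ by a two-sided estimate of $\sum_{w \in S} \Phi_{F'}(w)$, which for this coloring is at least $|\partial_{F'} S|$. Because $\delta_{J_f^*}(F') = 1$, at most one odd component of $G - S$ can send no edge of $F'$ to $S$: two such components would each contain a vertex of non-positive even colored degree, forcing $\delta_{J_f^*}(F') \ge 2$. If none does, then $|\partial_{F'} S| \ge o(G - S) \ge f(S) + 1$, while the bounds $\Phi_{F'}(u) \le f(u) - 1$ and $\Phi_{F'}(w) \le f(w)$ for the non-off vertices (with at most one unit of excess at the single off-vertex) give $\sum_{w \in S} \Phi_{F'}(w) \le f(S)$, a contradiction. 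If exactly one component is missed, its deficient vertex is the unique off-vertex and lies outside $S$, so every $w \in S$ satisfies $\Phi_{F'}(w) \le f(w)$; then $|\partial_{F'} S| \ge o(G - S) - 1 \ge f(S)$ forces $\sum_{w \in S} \Phi_{F'}(w) \ge f(S)$, whereas $\Phi_{F'}(u) \le f(u) - 1$ now gives $\sum_{w \in S} \Phi_{F'}(w) \le f(S) - 1$. Either way we reach a contradiction, so $\min I(u) \ge f(u)$, contradicting $u \in D$. Hence $G$ is not critical, and the chosen coloring makes $G$ neither $J_f^*$-critical nor $J_f^*$-factorable, as required.
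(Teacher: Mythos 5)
Your proposal is correct and follows essentially the same route as the paper: the same special coloring (red ends exactly at $S$), the same parity/double-counting argument using $\delta_{J_f^*}(F')=1$ to rule out criticality, and the same reduction of the other direction to the $B=\emptyset$, $o(G-A)>f(A)$ computation from \cref{thm:main:even}. The only differences are cosmetic — you make the equivalence ``$A=\emptyset$ iff factor-or-critical'' explicit and argue via a single witness vertex $u\in S$ rather than showing $S\subseteq A\cup C$ directly, and your ``if''/``only if'' labels are swapped relative to the statement, though both implications are in fact proved.
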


\begin{proof}
{\bf Necessity}.
Suppose that $G$ contains a nonempty vertex subset $S$ satisfying \cref{pf11}.
Same to the necessity part of the proof of \cref{thm:main:even},
we can show that the graph $G$ admits a $2$-end-coloring for which $G$ has no $J_f^*$-factors.
It suffices to show that $G$ is not $J_f^*$-critical for the same coloring.
Assuming that the graph $G$ is $J_f^*$-critical, namely, $D=V(G)$,
we shall show that the empty set $A\cup C$ contains the nonempty set $S$, which is absurd.
In other words, we will prove that 
\begin{equation}\label[ineq]{dsr1}
\Phi_F(v)\ge f(v)
\qquad\text{for any $J_f^*$-optimal subgraph $F$ and for any vertex $v\in S$}.
\end{equation}
Let $F$ be a $J_f^*$-optimal subgraph.
From \cref{lem:critical}, we infer that 
\begin{equation}\label{pf22}
\delta_{J_f^*}(F)=1\qquad\text{for any $J_f^*$-optimal subgraph $F$}.
\end{equation}
By parity argument, we have $E(S,T)\ne \emptyset$ for any odd component $T$ of the subgraph $G-S$ such that $v_0\not\in T$.

Assume that $v_0\in S$. 
By \cref{pf11}, we have 
\[
\sum_{v\in S}\Phi_{F}(v)
\gge o(G-S)
\gge f(S)+1.
\]
If there is a vertex $v\in S$ and a  such that $\Phi_F(v)<f(v)$,
then either there are two other vertices $v_1,v_2\in S$ such that $\Phi_F(v_i)\ge f(v_i)+1$ for $i=1,2$,
or there is a vertex $v_3\in S$ such that $\Phi_F(v_3)\ge f(v_3)+2$.
In either case, we have $\delta_{J_f^*}(F)\ge 2$ taking account of the contributions of the vertices $v_i$ to
the total distance between the optimal subgraph $F$ and the function $J_f^*$,
contradicting \cref{pf22}.

Otherwise, we have $v_0\not\in S$. In this case, there are at least $o(G-S)-1$ odd components $T$ of the subgraph $G-S$ 
such that $|E_F(S,T)|\ge 1$. Therefore, we infer that 
\[
\sum_{v\in S}\Phi_F(v)
\gge o(G-S)-1
\gge f(S).
\]
Since $v_0\not\in S$, we have $\Phi_F(v)\in J_f^*(v)$ for any vertex $v\in S$.
Hence we deduce that $\Phi_F(v)=f(v)$ for all vertices $v\in S$.
This guarantees \cref{dsr1}, and completes the proof of the sufficiency part.

\medskip
\noindent{\bf Sufficiency}.
By way of contradiction,
let $G$ be a graph satisfying \cref{cond:f:odd} without colored $J_f^*$-factors, and is not $J_f^*$-critical.
Suppose that $V(G)=C\cup D$. By \cref{lem:CD} and the connectivity of $G$, we have $V(G)=C$ or $V(G)=D$.
In the former case, the graph $G$ has a $J_f^*$-factor, while in the latter case, the graph $G$ is $J_f^*$-critical.
Same to the sufficiency part of the proof of \cref{thm:main:even}, 
we have $B=\emptyset$. Thus we find $A\ne \emptyset$.
Same to the remaining proof of \cref{thm:main:even},
we have $o(G-A)>f(A)$, contradicting \cref{cond:f:odd}.
This completes the proof.
\end{proof}

As an application of \cref{thm:main:odd}, we have the following corollary.

\begin{cor}
Let $G$ be a connected general graph of odd order, with a $2$-end coloring.
Let $f\colon V(G)\to \Z^+$ be a function such that $f(v)$ is even for each vertex $v$.
Suppose that \cref{cond:f:odd} holds true.
If the colored degree of each vertex $v$ is at least $f(v)$,
then $G$ contains a colored $J_f^*$-factor.
\end{cor}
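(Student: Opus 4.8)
The plan is to deduce the corollary from \cref{thm:main:odd}. Since $G$ satisfies \cref{cond:f:odd}, that theorem guarantees that, for the given $2$-end-coloring, either $G$ is $J_f^*$-critical or $G$ contains a colored $J_f^*$-factor. Hence it suffices to rule out the critical case; the whole point is to exploit the extra hypothesis $\Phi_G(v)\ge f(v)$, which was not available in the proof of \cref{thm:main:odd}.

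Suppose, for contradiction, that $G$ is $J_f^*$-critical. By \cref{lem:critical} we have $\delta(J_f^*)=1$, so every $J_f^*$-optimal subgraph $F$ has $\delta_{J_f^*}(F)=1$ and therefore a unique vertex, say $v_0$, with $\Phi_F(v_0)\notin J_f^*(v_0)$, at distance one. Because every $f(v)$ is even, the analysis of \cref{casethm:feven} together with the parity computation in the proof of \cref{thm:main:even} shows that this bad vertex has even colored degree with $\Phi_F(v_0)\le f(v_0)-2$, while every other vertex $v$ satisfies $\Phi_F(v)\in J_f^*(v)$ with $\Phi_F(v)$ odd. The key local observation is that both $\Phi_F(v_0)-1$ and $\Phi_F(v_0)+1$ are odd and hence lie in $J_f^*(v_0)$, so any change of the colored degree of $v_0$ by $\pm1$ repairs it.

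I would then run an augmenting procedure driven by the degree hypothesis. At the current bad vertex $w$ (initially $v_0$) the surplus $\Phi_G(w)-\Phi_F(w)\ge f(w)-(f(w)-2)=2$ equals $\sum_{e\notin F,\,e\ni w}e(w)$, so there is an edge $e\notin F$ incident with $w$ and with $e(w)>0$. Adding $e$ raises $\Phi(w)$ and moves it into $J_f^*(w)$. If $e$ is a red loop whose addition brings $\Phi(w)$ to $f(w)$, or if $e$ is a non-loop edge whose other endpoint remains inside its own target set, then the resulting subgraph has $\delta_{J_f^*}=0$, contradicting $\delta(J_f^*)=1$. Otherwise the defect merely migrates: either $w$ is still bad (a loop addition that has not yet reached $f(w)$) or a neighbour $w'$ becomes the new bad vertex, necessarily of even colored degree and at distance one, while the total distance stays equal to $1$; thus the new subgraph is again optimal and the hypotheses of the procedure are restored.

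The main obstacle is to argue that this process halts, and halts in the favourable way. Here I would use that every step strictly enlarges the edge set of the current subgraph, whereas the degree hypothesis always supplies an admissible edge at the current bad vertex; since $G$ has only finitely many edges, the process cannot continue indefinitely, so it must terminate at a subgraph with $\delta_{J_f^*}=0$. This contradicts criticality and forces the factor case of \cref{thm:main:odd}, which completes the proof. The delicate points to verify are the evenness of the colored degree at each successive bad vertex, so that a $\pm1$ change always lands on an odd element of $J_f^*$, and the loop subcase, where several loop additions at one vertex may be required before its colored degree reaches $f(w)$.
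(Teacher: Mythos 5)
Your argument is correct and is essentially the paper's: invoke \cref{thm:main:odd}, assume $G$ is $J_f^*$-critical, and use the hypothesis $\Phi_G(v)\ge f(v)$ to augment an optimal subgraph at its unique deficient vertex. Two details differ. Where you iterate an augmentation step and appeal to the finiteness of $E(G)$ for termination, the paper takes $F$ to be $J_f^*$-optimal with a \emph{maximal} edge set, so that a single augmentation already yields the contradiction; the two devices are interchangeable. More substantively, the paper does not assume $\Phi_F(v_0)\le f(v_0)-2$ at the outset: it treats $\Phi_F(v_0)\ge f(v_0)+1$ as a separate case and eliminates it by deleting an edge of $F$ at $v_0$ and applying \cref{lem:interval}. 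You instead dispose of that case in advance by citing \cref{casethm:feven}; this is legitimate, because that analysis uses only $v\in D$ (through \cref{lem:interval} and \cref{minI:even}) and $D=V(G)$ under criticality, so $\max I(v)\le f(v)-1$ holds at every vertex. That same inequality is also what underwrites the invariant your iteration needs --- that every non-deficient vertex of each successive optimal subgraph has odd colored degree at most $f(v)-1$, so that when the defect migrates it lands on an even value at most $f(w')-2$; without it, a neighbour with $\Phi_F(w')=f(w')$ could be pushed to $f(w')+1$ and your surplus argument would stall. You should make that dependence explicit rather than asserting the new bad vertex is ``necessarily of even colored degree.''
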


\begin{proof}
By \cref{thm:main:odd}, it suffices to show that $G$ is not $J_f^*$-critical.

Suppose that $G$ is $J_f^*$-critical. 
Let $F$ be a $J_f^*$-optimal subgraph with a maximal edge set.
By \cref{lem:critical}, there is a vertex $v_0$ such that $\Phi_F(v_0)\not\in J_f^*(v_0)$.

Assume that $\Phi_F(v_0)\le f(v_0)-2$.
From premise, we see that $\Phi_G(v_0)\ge f(v_0)$.
Thus there is an edge $e=v_0u_0$ such that $e(v_0)\ge 1$.
If the edge $e$ is a loop, i.e., $u_0=v_0$, then 
the distance between $\Phi_{F\cup e}(v_0)$ and the function $J_f^*(v_0)$ is 
either equal to or one less than the distance between $\Phi_{F}(v_0)$ and $J_f^*(v_0)$, namely,
\[
\dist_{F\cup e}(v_0,\,J_f^*(v_0))
\in
\{0,1\}.
\]
Since the subgraph $F$ is optimal, it is impossible that $\dist_{F\cup e}(v_0,\,J_f^*(v_0))=0$.
In other words, the subgraph $F\cup e$ must be optimal, contradicting the choice of $F$.
When $e$ is not a loop, the colored degree $\Phi_{F\cup e}(v_0)$ must be in the set $J_f^*(v_0)$.
Since 
\begin{equation}\label[ineq]{ineq:u0}
|\dist_{F\cup e}(u_0,\,J_f^*(u_0))-\dist_{F}(u_0,\,J_f^*(u_0))|\le 1,
\end{equation}
we infer that the subgraph $F\cup e$ must be optimal, the same contradiction.

Below we can suppose that $\Phi_F(v_0)\ge f(v_0)+1$. From definition, we have
\[
f(v_0)+1\in I(v_0).
\]
Since $\delta(F)=1$, we find $\Phi_F(v_0)=f(v_0)+1>0$.
Pick up an edge $e\in E(F)$ such that $e(v_0)\ge 1$.
If $e$ is a loop, then $\Phi_{F-e}(v_0)=f(v_0)-1\in J_f^*(v_0)$.
Thus the subgraph $F-e$ is a $J_f^*$-factor, a contradiction. 
Otherwise, we can suppose that $e=u_0v_0$ with $u_0\ne v_0$.
In this case, we have $\Phi_{F-e}(v_0)=f(v_0)\in J_f^*(v_0)$.
By \cref{ineq:u0}, we infer that the subgraph $F-e$ is also $J_f^*$-optimal.
Thus we have
\[
f(v_0)\in I(v_0).
\]
Since $\{f(v_0),\,f(v_0)-1\}\subset J_f^*(v_0)$,
by \cref{lem:interval} (ii), we infer that 
\[
f(v_0)-1\not\in I(v_0).
\]
From \cref{lem:interval}, we also see that the set $I(v_0)$ is allowed. 
It follows immediately that $f(v_0)-2\in I(v_0)$.
By \cref{lem:interval} (i), we find $f(v_0)\not\in J_f^*(v_0)$, a contradiction. 
This completes the proof.
\end{proof}

\end{document}